\theoremstyle{plain}
\newtheorem{theorem}{Theorem}[section]
\newtheorem{lemma}[theorem]{Lemma}
\newtheorem{corollary}[theorem]{Corollary}
\theoremstyle{definition}
\newtheorem{definition}[theorem]{Definition}
\theoremstyle{remark}
\newtheorem{remark}{Remark}
\begin{document}

\title{Unique positive solutions to $q$-discrete equations associated with orthogonal polynomials}

\author{
\name{T. Lasic Latimer\textsuperscript{a}\thanks{CONTACT T. Lasic Latimer. Email: tlas5434@uni.sydney.edu.au. ORCiD: 0000-0001-6859-7788.}}
\affil{\textsuperscript{a}School of Mathematics and Statistics F07, The University of Sydney, NSW 2006, Australia}
}

\maketitle

\begin{abstract}
Boelen \textit{et al.} \cite{Boelen} deduced a $q$-discrete Painlev\'e equation satisfied by the recurrence coefficients of orthogonal polynomials and conjectured that the equation had a unique positive solution. We prove their conjecture and discuss related work in the area, highlighting a potentially unifying methodology.
\end{abstract}

\begin{keywords}
Orthogonal polynomials; discrete equations; Painlev\'e equations
\end{keywords}

\section{Introduction}
In this paper, we consider the initial value problem (IVP)
\begin{equation}\label{even-odd}
  \begin{cases}
    &a_{2k}(a_{2k+1}+q^{1-2k-\kappa}a_{2k}
    +q^{2}a_{2k-1}+q^{-4k+3-\kappa}a_{2k+1}a_{2k}a_{2k-1}) \\
    &\phantom{a_{2k}(a_{2k+1}+}= (1-q^{2k})q^{\kappa + 2k-1}, \\
    &a_{2k+1}(a_{2k+2}+q^{-2k}a_{2k+1}
    +q^{2}a_{2k}+q^{-4k+1-\kappa}a_{2k+2}a_{2k+1}a_{2k}) \\
    &\phantom{a_{2k}(a_{2k+1}+}= (q^{-\kappa}-q^{2k+1})q^{\kappa + 2k},\\
    & a_{k} = 0 \; \text{for}\; k< 1, 
    \end{cases}
  \end{equation}
  where $\kappa \geq 0$, $0<q<1$ and $a_k\in\mathbb R$. Boelen {\em et al.} \cite{Boelen} showed that a solution $\{a_n\}_{n=1}^{\infty}$ arises as coefficients of a 3-term recurrence relation satisfied by $q$-orthogonal polynomials and conjectured that this IVP must have a unique positive solution. We prove this result in Theorem \ref{final theorem} of this paper. \\

  Let  $\{P_n\}_{n=0}^\infty$ be monic polynomials orthogonal with respect to a weight function $w(x)$ on an interval $I$. Taking $P_0=1$, and assuming $P_{-k}\equiv 0$ for positive integer $k$, these polynomials satisfy a 3-term recurrence relation
  \[
x\,P_n(x)=P_{n+1}+b_n\,P_n(x)+a_n\,P_{n-1}(x), \quad n\ge 0 ,
\]
where $b_n$ and $a_n$ are independent of $x$. Classical cases give rise to explicit coefficients. For example,  $b_n=0$, $a_n=n$ for Hermite polynomials $He_n(x)$.\\

Orthogonal polynomials that lie outside the classical case have been extensively studied. The semi-classical case arises under certain generalisations of conditions on the weight function \cite{Shohat1939, Freud}. In such cases, the coefficient $a_n$ turns out to satisfy nonlinear IVPs, which are known not to have explicit solutions. The search for unique positive solutions of such IVPs has been carried out for  polynomials orthogonal with respect to continuous measures. In this paper, we consider polynomials orthogonal with respect to a $q$-discrete measure $|x|^{\kappa}(q^{4}x^{4};q^{4})_{\infty}d_qx$.
  
 \subsection{Brief Literature Overview}
Although recurrence relations arise from orthogonal polynomials, methods for proving the existence of unique positive solutions need not originate from orthogonal polynomials. Such methods are often specific to the problem at hand and do not easily generalise, particularly to higher dimensional discrete equations. This is highlighted by Alsulami \textit{et al.} \cite{Alsulami} who recently proved that 
\begin{equation}\label{als}
    a_{n}(\sigma_{n,1}a_{n+1}+\sigma_{n,0}a_{n}+\sigma_{n,-1}a_{n-1}) + \kappa_{n}a_{n} = l_{n} \,,
\end{equation}
has a unique positive solution, under certain constraints on $l_{n}$, $\sigma_{n,i}$ and $\kappa_{n}$. Equation \eqref{als} is a non-linear equation which does not arise from orthogonal polynomials. However, it contains 
\begin{equation}\label{intro eq 2}
a_{n}(a_{n+1}+a_{n}+a_{n-1}) + \rho a_{n} =
    \begin{cases}
            n + \kappa, &         n \, \text{even},\\
            n, &         n \, \text{odd},
    \end{cases}
\end{equation}
as a special case. Equation \eqref{intro eq 2} is satisfied by the recurrence coefficients of polynomials orthogonal with respect to the weight $|x|^{\kappa}e^{-x^{4}/4 - \rho x^{2}/2}dx$. The proof used is independent of orthogonal polynomial theory and allows for the general form of Equation \eqref{als} but does not easily generalise other types of discrete equations.\\ 

Similarly, Clarkson \textit{et al.} \cite{Clarkson2015} recently prove that the recurrence relation satisfied by polynomials orthogonal with respect to an exponential cubic weight,
\begin{subequations}\label{cubic}
\begin{eqnarray}
    a_{n}+a_{n+1} &=& b_{n}^{2}-t \,, \\
    a_{n}(b_{n}+b_{n-1}) &=& n \,,
\end{eqnarray}
\end{subequations}
has a unique positive solution. Their proof also does not recognise the discrete equation arises from orthogonal polynomials. \\ 

The method presented in this paper allows one to show the existence of a unique positive solution to a large class of recurrence relations, arising from both $q$-discrete and continuous measures. In this way we extend the results of Magnus \cite{MagnusPos} to the $q$-discrete setting. Magnus, adding to the earlier work of Bonan and Nevai \cite{Bonan1983}, proved that recurrence relations satisfied by polynomials orthogonal with respect to weights of the form $e^{P(x)}dx$, where $P(x)$ is an even polynomial with negative leading order, have a unique positive solution. The proof was restricted to smooth weights and thus could not be extended to the $q$-discrete setting. 
\subsection{Outline}
The paper is structured as follows. In Section \ref{prelim} we define notation and recall well-known results from the literature which will be used throughout the paper. In Section \ref{method} the findings of our work is presented and Theorem \ref{final theorem} is proved. We conclude with Section \ref{discussion} where we discuss how the method can be generalised to a large class of discrete equations and future work required to extend the proof further.

\section {Preliminary Work}\label{prelim}
\begin{remark}
Unless otherwise stated, any discrete equation 
\[ f_{n}(a_{n},a_{n-1},...) = 0, \qquad \text{for $n \in \mathbb{N}$} \,,  \]
will have the initial condition $a_{n} = 0$ for $n < 1$.
\end{remark}

For completeness we introduce some well known definitions from $q$-algebra. These definitions can be found in \cite{Ernst2012}.

\begin{definition}\label{q-div}
The $q$-derivative, $D_{q}$, of $f(x)$ is defined as
\begin{equation}\nonumber
    D_{q}f(x) = \frac{f(x)-f(qx)}{x(1-q)} \,.
\end{equation}
Note that $D_{q}$ satisfies the identity
\[ D_{q}(xf(x)) = qxD_{q}f(x) + f(x) \,. \]
\end{definition}

\begin{definition}
We define $[n]_{q}$ as
\[ [n]_{q} = \frac{1-q^{n}}{1-q} \,.\]
Note that
\[ D_{q}x^{n} = [n]_{q}x^{n-1} \,. \]
\end{definition}

\begin{definition}
The Pochammer symbol $(a;q)_{\infty}$ is defined as
\begin{equation*}
    (a;q)_{\infty} = \prod_{j=0}^{\infty}(1-aq^{j}) \,.
\end{equation*}
\end{definition}

\begin{definition}
The $q$ integral of $f(x)$ from -1 to 1 is defined as
\begin{equation}\nonumber
    \int_{-1}^{1}f(x)d_{q}x = (1-q)\sum_{k=0}^{\infty} (f(q^{k})+f(-q^{k}))q^{k} \,.
\end{equation}
\end{definition}
Shifting from $q$-algebra we now look to recall the famous Favard's Theorem.

\begin{theorem}[Favard's Theorem \protect{\cite[Chapter 2 Theorem 1.5]{Freud,Assche}}]\label{favard}
Suppose we are given a sequence of polynomials $\{P_n(x)\}_{n=0}^\infty$ and real numbers $\{a_{n}\}_{n=1}^{\infty}$ with $a_{n} > 0$, which satisfy 
\begin{equation}\nonumber
    x\,P_{n} = P_{n+1} + a_{n}P_{n-1},
\end{equation}
where $P_{0} = 1$, and $P_{1} = x$. Then there exists a positive measure $\mu$ on the real line, such that these polynomials are orthogonal in $L^{2}(\mu)$.
\end{theorem}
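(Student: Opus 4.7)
The plan is to construct a linear functional on the space of polynomials whose orthogonality structure matches the given recurrence, and then realise that functional as integration against a measure via the Hamburger moment problem. Since the recurrence forces $\deg P_n = n$, the sequence $\{P_n\}_{n=0}^\infty$ is a basis of $\mathbb{R}[x]$, so I can unambiguously define a linear functional $L : \mathbb{R}[x] \to \mathbb{R}$ by declaring $L(P_0) = 1$ and $L(P_n) = 0$ for $n \geq 1$. The moments $c_k = L(x^k)$ are then determined by expanding $x^k$ in the $P_n$ basis.

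Next I would verify that $L$ is the ``orthogonality functional'' for $\{P_n\}$. Multiplying the recurrence by $P_m$ and applying $L$, a straightforward induction on $n$ shows $L(P_m P_n) = 0$ for $m \neq n$ and $L(P_n^2) = a_1 a_2 \cdots a_n$, which is strictly positive by the hypothesis $a_n > 0$. From this, positive definiteness of the Hankel form is immediate: for any non-zero polynomial $Q = \sum_{k=0}^N \alpha_k P_k$ one has $L(Q^2) = \sum_{k=0}^N \alpha_k^2 \, a_1 \cdots a_k > 0$, so the Hankel matrices $(c_{i+j})_{i,j=0}^N$ are positive definite for every $N$.

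The last step is to invoke the Hamburger moment theorem: any positive-definite moment sequence $\{c_k\}$ admits a representing positive Borel measure $\mu$ on $\mathbb{R}$ with $c_k = \int_{\mathbb{R}} x^k \, d\mu(x)$. Extending by linearity, $L(f) = \int f \, d\mu$ for every polynomial $f$, and hence the orthogonality $L(P_mP_n) = 0$ translates into $\int P_m P_n \, d\mu = 0$, which is orthogonality of $\{P_n\}$ in $L^2(\mu)$.

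The main obstacle is the existence step in the Hamburger problem, since the measure itself is not constructed directly from the recurrence. A standard route is to associate to each truncated functional $L_N$ a discrete measure supported on the zeros of $P_{N+1}$ with weights coming from the Christoffel–Darboux formula, and then extract a weak-$\star$ limit via Helly's selection theorem, using positive definiteness to control the total masses. Uniqueness of $\mu$ is not part of the statement (and would require further growth conditions on the $a_n$), so only the existence half of the Hamburger theory is needed.
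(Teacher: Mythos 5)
The paper does not prove this statement: Favard's theorem is quoted as a known result with a citation to Freud and Van Assche, so there is no in-paper argument to compare against. Your proposal is the classical proof found in those references --- define the orthogonality functional $L$ by $L(P_0)=1$, $L(P_n)=0$ for $n\ge 1$, derive $L(P_mP_n)=\delta_{mn}\,a_1\cdots a_n>0$ from the recurrence, conclude positive definiteness of the Hankel forms, and realise $L$ by a measure via the existence half of the Hamburger moment problem --- and it is correct. The only step that deserves a little more care than your sketch gives it is the passage to the limit in the Helly argument: the quadrature measures all have total mass $c_0=1$, so tightness of the masses is not the issue; rather, one must justify that the moments of the weak-$\star$ limit agree with the $c_k$ (e.g.\ by noting that $\int x^{2k+2}\,d\mu_N$ is bounded uniformly in $N$, giving uniform integrability of $x^k$). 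That point is standard and is part of the cited Hamburger theory, so it is a matter of polish rather than a gap.
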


We also recall a theorem on moment determinate measures and prove a consequential lemma.
\begin{theorem} \label{determinate theorem}
Let $\mu$ be a positive measure on $\mathbb{R}^{+}:[0,\infty)$ such that
\[ 
\int_{\mathbb{R}^{+}} e^{t|x|} d\mu(x) < \infty,
\]
for where $t>0$ is a constant. Then $\mu $ is uniquely determined by the sequence of its moments, $\mu_{k} = \int_{\mathbb{R}^{+}} x^{k} d\mu(x)$, for $k$ in $\mathbb{N}\cup\{0\}$ \cite[Theorem 2]{Lin}.
\end{theorem}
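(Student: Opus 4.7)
The plan is to exploit the analyticity of the Laplace transform of $\mu$ in a complex neighbourhood of zero, and then invoke uniqueness of Fourier inversion; an equivalent route is to verify Carleman's moment determinacy condition. Either path reduces to a single moment estimate.

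First I would establish that $\mu$ has all moments finite with controlled growth. Since $|x|^{k} \leq (k!/t^{k}) e^{t|x|}$ on $[0,\infty)$, the hypothesis gives
\[
\mu_{k} = \int_{\mathbb{R}^{+}} x^{k}\, d\mu(x) \leq \frac{k!}{t^{k}} \int_{\mathbb{R}^{+}} e^{t|x|}\, d\mu(x) =: \frac{C\, k!}{t^{k}} \,.
\]
Consequently the formal series $M_{\mu}(s) := \sum_{k \geq 0} \mu_{k} s^{k}/k!$ has radius of convergence at least $t$, and Fubini identifies it with the Laplace transform $\int e^{sx}\, d\mu(x)$ on the complex disk $|s|<t$.

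Next, suppose $\nu$ is another positive measure on $\mathbb{R}^{+}$ sharing all moments with $\mu$. Then $M_{\nu}$ is analytic on the same disk and has identical Taylor coefficients, so $M_{\mu}\equiv M_{\nu}$ there. Restricting to $s=i\xi$ with $|\xi|<t$ shows that the characteristic functions of $\mu$ and $\nu$ agree on the real interval $(-t,t)$. Both characteristic functions extend analytically to the strip $|\operatorname{Im} s|<t$ in view of the same moment bound, so the identity theorem forces agreement throughout the strip, and in particular on all of $\mathbb{R}$. Fourier uniqueness then yields $\mu=\nu$.

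The main obstacle is justifying the interchange of summation and integration together with the analytic extension of the characteristic function; both are controlled by the single estimate $\mu_{k}\leq C k!/t^{k}$, so the whole argument is really \emph{bound the moments, then let complex analysis do the rest}. An equivalent and perhaps more economical route is Carleman's theorem: the same bound with Stirling's formula gives $\mu_{2k}^{-1/(2k)} \gtrsim c/k$, whose sum over $k$ diverges, and Carleman's criterion delivers determinacy directly without passing through characteristic functions.
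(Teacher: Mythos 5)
The paper does not prove this statement at all: it is quoted verbatim from the literature with the citation to Lin, and the result (essentially Cram\'er's sufficient condition for moment determinacy) is used as a black box in Lemma \ref{uniqueness}. Your argument is a correct, self-contained proof of that black box, and it is the standard one. The moment bound $\mu_k \le C\,k!/t^{k}$ is right, it does give analyticity of the Laplace transform on $|s|<t$ and of the characteristic function on a strip, and the identity-theorem-plus-Fourier-uniqueness conclusion is sound; the Carleman alternative also checks out, since $\mu_{2k}^{1/(2k)}\lesssim k/t$ by Stirling makes $\sum \mu_{2k}^{-1/(2k)}$ diverge (and Hamburger determinacy is stronger than the Stieltjes determinacy actually needed for a measure on $[0,\infty)$). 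Two small points you should make explicit if you write this up: first, before asserting that $M_\nu$ is analytic on the disk you must verify that $\nu$ also has finite exponential moments --- this follows from Tonelli applied to $\int e^{sx}\,d\nu=\sum s^{k}\nu_k/k!$ for $0\le s<t$ using $\nu_k=\mu_k$, but it is a step, not a given; second, Fourier uniqueness applies because both measures are finite, which holds here since $\int e^{tx}\,d\mu<\infty$ forces $\mu(\mathbb{R}^{+})<\infty$. What your proof buys over the paper's citation is self-containedness; what the citation buys is access to sharper criteria (e.g.\ Hardy's condition $\int e^{c\sqrt{x}}\,d\mu<\infty$ for the Stieltjes problem), which the paper does not need.
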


\begin{lemma} \label{uniqueness}
Let $\mu$ be a positive measure on the real line such that
\[ 
\int_{\mathbb{R}} e^{tx^{2k}} d\mu(x) < \infty,
\]
for some $k$ in $\mathbb{N}$, and constant $t>0$. Then $ \int_{\mathbb{R}} x^{2} d\mu(x) $ is determined by the sub-sequence of its moments given by
\begin{equation*}
\mu_{2kj} = \int_{\mathbb{R}} x^{2kj} d\mu(x) \, \quad \text{for $j$ in $\mathbb{N}$} .
\end{equation*}
\end{lemma}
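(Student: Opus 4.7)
The plan is to reduce the claim to the determinacy result in Theorem \ref{determinate theorem} by pushing $\mu$ forward under the map $\varphi:x\mapsto x^{2k}$. The key observation is that $x^{2}=\varphi(x)^{1/k}$ is itself an even function of $x$, so the integral $\int x^{2}d\mu$ can be written as an integral against the pushforward measure on $[0,\infty)$, and then only the moments of that pushforward measure are needed.

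Concretely, I would first define $\nu$ on $\mathbb{R}^{+}$ as the pushforward of $\mu$ under $\varphi$, so that $\int_{\mathbb{R}^{+}}f(y)\,d\nu(y)=\int_{\mathbb{R}}f(x^{2k})\,d\mu(x)$ for every Borel-measurable $f\ge 0$. Taking $f(y)=e^{ty}$ gives
\[
\int_{\mathbb{R}^{+}} e^{t|y|}\,d\nu(y)=\int_{\mathbb{R}} e^{tx^{2k}}\,d\mu(x)<\infty,
\]
so $\nu$ satisfies the hypotheses of Theorem \ref{determinate theorem}. Taking $f(y)=y^{j}$ shows that the moments of $\nu$ are exactly $\int y^{j}d\nu(y)=\int x^{2kj}d\mu(x)=\mu_{2kj}$. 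Hence, by Theorem \ref{determinate theorem}, the measure $\nu$ is completely determined by the subsequence $\{\mu_{2kj}\}_{j\in\mathbb{N}\cup\{0\}}$.

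Finally, taking $f(y)=y^{1/k}$ (which is continuous and nonnegative on $\mathbb{R}^{+}$, hence Borel measurable) yields
\[
\int_{\mathbb{R}}x^{2}\,d\mu(x)=\int_{\mathbb{R}^{+}}y^{1/k}\,d\nu(y),
\]
and the right-hand side depends only on $\nu$, which in turn depends only on the moments $\mu_{2kj}$. Two measures satisfying the hypotheses and sharing this subsequence of moments therefore produce the same $\nu$ and hence the same value of $\int x^{2}d\mu$, which is what we need.

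The only subtle point, and what I would expect to be the main thing to check carefully, is the justification that a pushforward under $x\mapsto x^{2k}$ exists as a Borel measure on $[0,\infty)$ and that the change-of-variables identities used above apply to the non-polynomial function $y^{1/k}$; this is handled by standard measure-theoretic arguments (approximating $y^{1/k}$ by simple functions and using the definition of the pushforward), and no further hypothesis on $\mu$ is required beyond the given exponential integrability.
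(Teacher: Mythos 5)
Your argument is exactly the paper's: push $\mu$ forward under $x\mapsto x^{2k}$, verify the exponential integrability hypothesis of Theorem \ref{determinate theorem} for the pushforward, identify its moments with $\mu_{2kj}$, and recover $\int x^{2}d\mu$ as $\int y^{1/k}$ against the pushforward. The proposal is correct and matches the paper's proof step for step (the paper is in fact terser about the measure-theoretic justifications you flag at the end).
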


\begin{proof}
The map $f:x \rightarrow u$ is defined as
\begin{eqnarray*}
    u &=& x^{2k} \,.
\end{eqnarray*}
Thus,
\begin{equation}\nonumber
\int_{\mathbb{R}^{+}} e^{cu} d\mu_{f} (u) = \int_{\mathbb{R}} e^{cx^{2k}} d\mu (x) < \infty \,,
\end{equation}
where $\mu_{f}(u) = \mu(f^{-1}(u))$. Applying Theorem \ref{determinate theorem}, we can observe that the measure $\mu_{f}$ is uniquely determined by its moments,
\begin{equation*}
    \int_{\mathbb{R}^{+}} u^{j} d\mu_{f}(u) = \int_{\mathbb{R}} x^{2kj} d\mu (x) = \mu_{2kj} \,.
\end{equation*}
It follows that
\begin{equation*}
    \int_{\mathbb{R}} x^{2} d\mu(x) = \int_{\mathbb{R}^{+}} u^{\frac{1}{k}} d\mu_{f} (u) \,,
\end{equation*}
is determined by $\mu_{2kj}$.
\end{proof}

\section{Method}\label{method}
We aim to prove that Equation \eqref{even-odd} has a unique positive solution. However, to begin we prove Lemma \ref{q step 1} for an alternative discrete equation (which will later be shown to be equivalent).

\begin{lemma} \label{q step 1}
Suppose we have a sequence of real numbers $\{ b_{n} \}_{n=1}^{\infty}$ which satisfy
\begin{eqnarray} 
b_{n}(b_{n+1}+b_{n}+b_{n-1} + (1-q^{2})\sum^{n-2}_{j=1}b_{j}) - (1-q^{n})q^{\kappa + n-1} &=& 0, \, n \, \mathrm{even,} \nonumber \\
b_{n}(b_{n+1}+b_{n}+b_{n-1} +(1- q^{2}) \sum^{n-2}_{j=1}b_{j}) - (q^{-\kappa}-q^{n})q^{\kappa + n-1} &=& 0, \, n \, \mathrm{odd} \,.\nonumber \\ \label{even-odd2}
\end{eqnarray}
Let $\{ P_{n} \}_{n=0}^{\infty}$ be a sequence of monic polynomials defined as
\begin{equation}\label{recurrence1}
    P_{n+1} = xP_{n} - b_{n}P_{n-1} \, ,
\end{equation}
then
\begin{equation}\label{ansatz}
    xD_{q}P_{n} = [n]_{q}P_{n} + A_{n}P_{n-2} + B_{n}P_{n-4} \,,
\end{equation}
where
\begin{subequations}
\begin{eqnarray} 
A_{n} &=& \frac{q^{1-n-\kappa}}{1-q}b_{n}b_{n-1}(\sum^{n+1}_{j=1}b_{j} - q^{2}\sum^{n-3}_{j=1}b_{j}) \,,\label{Ans2}\\
B_{n} &=& \frac{q^{3-n-\kappa}}{1-q}b_{n}b_{n-1}b_{n-2}b_{n-3} \label{Ans3}\,.
\end{eqnarray}
\end{subequations}
Furthermore
\begin{eqnarray}\label{added relations}
(q^{2-n}-1)b_{n-1}+(q^{2}-1)\sum^{n-2}_{j=1}b_{j}+q^{-2n+5-\kappa}b_{n}b_{n-1}b_{n-2} &=&0 , \, n \, \text{even,} \nonumber \\
(q^{-\kappa+2-n}-1)b_{n-1}+(q^{2}-1)\sum^{n-2}_{j=1}b_{j}+q^{-2n+5-\kappa}b_{n}b_{n-1}b_{n-2} &=&0 , \, n \, \text{odd.} \nonumber \\
\end{eqnarray}
\end{lemma}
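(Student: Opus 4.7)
The plan is to prove both the structure relation \eqref{ansatz} and the auxiliary identities \eqref{added relations} jointly by strong induction on $n$, using only the $q$-Leibniz rule from Definition \ref{q-div} and the three-term recurrence \eqref{recurrence1}. First I would settle the base cases for small $n$ (say $n \le 3$), where several of the terms $P_{n-2}$, $P_{n-4}$, and $b_j$ with $j \le 0$ vanish; the remaining identities reduce to direct substitution from \eqref{even-odd2}.

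For the inductive step, I apply $D_q$ to $P_{n+1} = xP_n - b_nP_{n-1}$ and multiply by $x$; the identity $D_q(xf) = qxD_qf + f$ yields
\[
xD_qP_{n+1} = qx\,(xD_qP_n) + xP_n - b_n\,(xD_qP_{n-1}).
\]
Substituting the inductive hypothesis \eqref{ansatz} into $xD_qP_n$ and $xD_qP_{n-1}$ and re-expanding every resulting $xP_{n-2k}$ through $xP_j = P_{j+1} + b_jP_{j-1}$, one obtains a linear combination of $P_{n+1}, P_{n-1}, P_{n-3}$ and $P_{n-5}$. Matching the $P_{n+1}$ slot produces $q[n]_q + 1 = [n+1]_q$ automatically, the $P_{n-5}$ slot reduces to $qb_{n-4}B_n = b_nB_{n-1}$ (a direct check against \eqref{Ans3}), and the $P_{n-3}$ slot gives a two-step recurrence for $B_{n+1}$ that one verifies satisfies \eqref{Ans3}.

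The principal obstacle is the $P_{n-1}$ coefficient, which gives an expression for $A_{n+1}$ that is not manifestly of the form \eqref{Ans2}. To bridge the gap I would invoke \eqref{even-odd2} at index $n+1$ to collapse $b_{n+1}\bigl(b_{n+2}+b_{n+1}+b_n+(1-q^2)\sum_{j=1}^{n-1}b_j\bigr)$ into a pure $q$-power, and then use \eqref{added relations} at index $n$ (available by the inductive hypothesis) to absorb the residual cubic $b_nb_{n-1}b_{n-2}$ together with the telescoping sum $\sum_{j=1}^{n-2}b_j$. To propagate \eqref{added relations} itself, I would combine \eqref{even-odd2} at indices $n$ and $n-1$, dividing by $b_{n-1}$ and $b_n$ respectively, and then take a suitable linear combination; the parity-dependent right-hand sides of \eqref{even-odd2} are engineered so that the $q$-power contributions telescope into the $(q^{2-n}-1)b_{n-1}$ term on the even side and $(q^{-\kappa+2-n}-1)b_{n-1}$ on the odd side.

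The mechanics are elementary once the book-keeping of $q$-powers and parity is organised, but the central subtlety is the joint nature of the induction: \eqref{added relations} is \emph{used} when closing \eqref{ansatz} at step $n+1$, while \eqref{added relations} at step $n+1$ is in turn derived using \eqref{even-odd2}. I expect that keeping the two induction hypotheses in lock-step, and correctly aligning the parity shifts between $n$ and $n+1$, will be the most error-prone part of the argument.
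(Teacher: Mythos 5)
Your treatment of \eqref{ansatz} is essentially the paper's argument: expand $xD_q(xP_n)=xD_q(P_{n+1}+b_nP_{n-1})$ using $D_q(xf)=qxD_qf+f$, push the remaining factors of $x$ back through \eqref{recurrence1}, and match the coefficients of $P_{n+1},P_{n-1},P_{n-3},P_{n-5}$; the four slot relations you list are exactly the paper's. One correction, though: the $P_{n-1}$ slot does \emph{not} require \eqref{added relations}. Writing $\Sigma_m=\sum_{j=1}^m b_j$, substituting \eqref{Ans2} into that slot reduces it (every term carries a factor $b_n$) to
\[
b_{n+1}\bigl(\Sigma_{n+2}-q^{2}\Sigma_{n-1}\bigr)-q^{2}\,b_{n-1}\bigl(\Sigma_{n}-q^{2}\Sigma_{n-3}\bigr)=q^{\kappa+2n-1}(1-q^{2}),
\]
and each bracketed factor times its $b$ is precisely the left-hand side of \eqref{even-odd2} at index $n+1$, respectively $n-1$; the parity-alternating right-hand sides then give $q^{\kappa+2n-1}(1-q^{2})$ for either parity of $n$. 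So no cubic $b_nb_{n-1}b_{n-2}$ survives and the induction for \eqref{ansatz} closes on \eqref{even-odd2} alone, which matters because it removes the circularity you were worried about.

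The genuine gap is your route to \eqref{added relations}. These identities contain the full partial sum $\sum_{j=1}^{n-2}b_j$ and are, in effect, the \emph{telescoped} form of the $A$-slot relation summed from the initial conditions up to $n$; a linear combination of \eqref{even-odd2} at just the two indices $n$ and $n-1$ cannot produce them (and dividing by $b_{n-1}$ and $b_n$ is not available for a general real sequence). Indeed, \eqref{added relations} combined with \eqref{even-odd2} is equivalent to the original system \eqref{even-odd}, which is not a local consequence of \eqref{even-odd2}. The missing idea is to extract a \emph{second}, independent expression for $A_n$ from the ansatz itself: writing $P_n=x^{n}+\alpha_nx^{n-2}+\cdots$, the recurrence gives $\alpha_{n+1}=\alpha_n-b_n$, hence $\alpha_n=-\sum_{j=1}^{n-1}b_j$, and comparing the $x^{n-2}$ coefficients of \eqref{ansatz} gives $[n-2]_q\alpha_n=[n]_q\alpha_n+A_n$, i.e.
\[
A_n=\frac{q^{n-2}(1-q^{2})}{1-q}\sum_{j=1}^{n-1}b_j\,.
\]
Equating this with \eqref{Ans2}, after using \eqref{even-odd2} at index $n-1$ to collapse $b_{n-1}(\Sigma_{n}-q^{2}\Sigma_{n-3})$ into a $q$-power, yields \eqref{added relations} directly. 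Without this step (or an explicit telescoping of the $A$-slot recurrence $A_{n+1}=qA_n+q^{n-1}(1+q)b_n$ down to $n=1$), the second half of the lemma is not proved.
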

\begin{proof}
We prove by direct substitution and show that Equation \eqref{ansatz} satisfies the recurrence relation, Equation \eqref{recurrence1}. Hence that
\begin{equation}\label{dirsub}
xD_{q}(xP_{n}) = xD_{q}(P_{n+1} + b_{n}P_{n-1}) \,.
\end{equation}
By definition of the $q$-derivative the left hand side of Equation \eqref{dirsub} is given by
\begin{equation*}
xD_{q}(xP_{n}) = x(qxD_{q}P_{n} + P_{n}) \,.
\end{equation*}
We can write the above expression in terms of the basis $P_{n}$ by applying Equations \eqref{ansatz} and \eqref{recurrence1} respectively,
\begin{eqnarray*}
xD_{q}(xP_{n}) &=& xq([n]_{q}P_{n} + A_{n}P_{n-2} + B_{n}P_{n-4}) + xP_{n} \,,\\
&=& q[n]_{q}(P_{n+1} + b_{n}P_{n-1}) + qA_{n}(P_{n-1} + b_{n-2}P_{n-3}) \\ && \, + qB_{n}(P_{n-3} + b_{n-4}P_{n-5}) + P_{n+1} + b_{n}P_{n-1} \,.
\end{eqnarray*}
Similarly we can apply Equation \eqref{ansatz} to write the right hand side of Equation \eqref{dirsub} in terms of the basis $P_{n}$,
\begin{eqnarray*}
xD_{q}(P_{n+1} + b_{n}P_{n-1}) &=& [n+1]_{q}P_{n+1} + A_{n+1}P_{n-1} + B_{n+1}P_{n-3} \\ 
&& \, + b_{n}([n-1]_{q}P_{n-1} + A_{n-1}P_{n-3} + B_{n-1}P_{n-5}) \,.
\end{eqnarray*}

Equating terms and comparing coefficients of $P_{n+1}$, $P_{n-1}$, $P_{n-3}$ and $P_{n-5}$ respectively we arrive at
\begin{subequations}
\begin{eqnarray}
[n+1]_{q} &=& q[n]_{q} + 1 \,,\label{l0} \\
q[n]_{q}b_{n} + qA_{n} + b_{n} &=& A_{n+1} + b_{n}[n-1]_{q} \,,\label{l1} \\
qA_{n}b_{n-2} + qB_{n} &=& B_{n+1} + b_{n}A_{n-1} \label{l2} \,,\\
qB_{n}b_{n-4} &=& b_{n}B_{n-1} \,. \label{l3}
\end{eqnarray}
\end{subequations}
Equation \eqref{l0} is satisfied by the definition of $[n]_{q}$. Equation \eqref{l3} is satisfied by the definition of $B_{n}$. We find that Equation \eqref{l2} is satisfied by substituting in the definition of $B_{n}$ and $A_{n}$. Thus, we are left to prove that $A_{n}$ satisfies Equation \eqref{l1}. Hence that
\begin{eqnarray*}
    q[n]_{q}b_{n} - b_{n}[n-1]_{q} + b_{n}&=& \frac{q^{-n-\kappa}}{1-q}b_{n+1}b_{n}(\sum^{n+2}_{j=1}b_{j} - q^{2}\sum^{n-2}_{j=1}b_{j}) \\ && \, - q(\frac{q^{1-n-\kappa}}{1-q}b_{n}b_{n-1}(\sum^{n+1}_{j=1}b_{j} - q^{2}\sum^{n-3}_{j=1}b_{j})) \,.
\end{eqnarray*}
Substituting $[n]_{q} = \frac{1-q^{n}}{1-q}$ above, we find after algebraic manipulation that
\begin{equation*}
    q^{\kappa + 2n-1}(1-q^{2}) = b_{n+1}(\sum^{n+2}_{j=1}b_{j} - q^{2}\sum^{n-1}_{j=1}b_{j}) - q^{2} b_{n-1}(\sum^{n}_{j=1}b_{j} - q^{2}\sum^{n-3}_{j=1}b_{j})) \,.
\end{equation*}
Equation \eqref{even-odd2} indicates that for even and odd $n$
\begin{equation*}
    b_{n+1}(\sum^{n+2}_{j=1}b_{j} - q^{2}\sum^{n-1}_{j=1}b_{j}) - q^{2} b_{n-1}(\sum^{n}_{j=1}b_{j} - q^{2}\sum^{n-3}_{j=1}b_{j})) = q^{\kappa+2n-1}(1-q^{2}) \,.
\end{equation*}
Thus $A_{n}$ is a valid solution to Equation \eqref{l1}. Hence, we have proved that Equation \eqref{ansatz} is a valid expression.\\ \\
Equation \eqref{ansatz} can now be used to prove Equation \eqref{added relations}. We will prove for odd $n$, the even case follows similarly.  Expressing $P_{n}$ as $x^{n} + \alpha_{n} x^{n-2} + ...$ and equating the coefficient of $x^{n-1}$ in Equation \eqref{recurrence1} we arrive at
\begin{equation*}
    \alpha_{n+1} = \alpha_{n} - b_{n} \,.
\end{equation*}
Thus, summing over $n$,
\begin{equation*}
    \alpha_{n} = -\sum_{j=1}^{n-1}b_{j} \,.
\end{equation*}
Equating the coefficient of $x^{n-2}$ in Equation \eqref{ansatz} shows that
\begin{equation*}
    [n-2]_{q}\alpha_{n} = [n]_{q}\alpha_{n} + A_{n} \,.
\end{equation*}
Thus,
\begin{equation}\label{An1}
    A_{n} = \frac{q^{n-2}(1-q^{2})}{1-q} \sum_{j=1}^{n-1}b_{j} \,.
\end{equation}
Substituting Equation \eqref{even-odd2} (odd) into Equation \eqref{Ans2} we have
\begin{eqnarray*}
    A_{n} &=& \frac{q^{1-n-\kappa}}{1-q}b_{n}b_{n-1}(\sum^{n+1}_{j=1}b_{j} - q^{2}\sum^{n-3}_{j=1}b_{j}) \,,\\
    &=& \frac{q^{1-n-\kappa}}{1-q}(b_{n+1}b_{n}b_{n-1} + (q^{-\kappa} - q^{n})q^{\kappa+n-2}b_{n}) \,.
\end{eqnarray*}
Hence Equation \eqref{An1} gives
\begin{equation*}
    \frac{q^{1-n-\kappa}}{1-q}(b_{n+1}b_{n}b_{n-1} + (q^{-\kappa} - q^{n})q^{\kappa+n-2}b_{n}) = \frac{q^{n-2}(1-q^{2})}{1-q} \sum_{j=1}^{n-1}b_{j}\,. 
\end{equation*}
After algebraic manipulation the above equation becomes
\begin{equation*}
    q^{3-2n-\kappa}b_{n+1}b_{n}b_{n-1} + (q^{-\kappa-n+1} - 1)b_{n} = (1-q^{2}) \sum_{j=1}^{n-1}b_{j} \,.
\end{equation*}
The odd case of Equation \eqref{added relations} follows immediately. The even case is proved in a similar way.
\end{proof}

We now shift back to Equation \eqref{even-odd}. Let $\{a_{n} \}_{n=1}^{\infty}$ be a sequence of positive real numbers which satisfy Equation \eqref{even-odd}. Define the monic polynomials $ \{ P_{n} \}_{n=0}^{\infty}$ as
\begin{equation}\label{recurrence2}
    xP_{n} = P_{n+1} + a_{n}P_{n-1} \, .
\end{equation}

\begin{theorem}\label{q step 2}
The polynomials, $ \{ P_{n} \}_{n=0}^{\infty}$, defined by Equation \eqref{recurrence2} satisfy
\begin{equation}\nonumber
    xD_{q}P_{n} = [n]_{q}P_{n} + A_{n}P_{n-2} + B_{n}P_{n-4} \,,
\end{equation}
where $[n]_{q}$, $A_{n}$ and $B_{n}$ are defined in Theorem \ref{q step 1}. 
\end{theorem}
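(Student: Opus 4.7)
The plan is to reduce Theorem \ref{q step 2} to Lemma \ref{q step 1} by showing that any sequence $\{a_n\}$ satisfying Equation \eqref{even-odd} together with the boundary conditions $a_k=0$ for $k<1$ also satisfies Equation \eqref{even-odd2}, taking $b_n=a_n$. Once this is in hand, Lemma \ref{q step 1} applies directly with $b_n=a_n$ and delivers the ansatz together with the stated formulas for $A_n$ and $B_n$.

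The bridge between the two equations is a short algebraic observation: dividing \eqref{even-odd2} at index $n$ and \eqref{even-odd} at index $n$ by $a_n$ and subtracting, the difference collapses to exactly the left-hand side of \eqref{added relations} at index $n+1$. Consequently, modulo \eqref{even-odd} at $n$, the statements ``\eqref{even-odd2} at $n$'' and ``\eqref{added relations} at $n+1$'' are equivalent, and the task reduces to showing that a sequence solving \eqref{even-odd} automatically satisfies \eqref{added relations} at every index.

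I would prove \eqref{added relations} by strong induction on $n$. The base cases (small $n$) are immediate because the boundary conditions force the cubic term $a_n a_{n-1} a_{n-2}$ and the partial sum $\sum_{j=1}^{n-2} a_j$ to vanish identically. For the inductive step at index $n+1$, I would invoke the inductive hypothesis, which via the equivalence above supplies \eqref{even-odd2} at indices strictly less than $n$; combining \eqref{even-odd2} at $n-1$ with \eqref{even-odd} at $n-1$ and at $n$ then allows one to eliminate the quadratic and cubic terms and, after simplification, produces \eqref{added relations} at $n+1$. In spirit, this mirrors the extraction of the partial sum from the $x^{n-2}$-coefficient in the proof of Lemma \ref{q step 1}.

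The main obstacle I anticipate is the bookkeeping of $q$-exponents, which is complicated by the parity split between even and odd $n$ in both \eqref{even-odd} and \eqref{added relations}. A uniform parity-aware treatment, reusing the telescoping-style identity $\mathrm{RHS}(n+1)-q^{2}\,\mathrm{RHS}(n-1) = q^{\kappa+2n-1}(1-q^{2})$ that powered the final simplification in the proof of Lemma \ref{q step 1}, should make the required cancellations explicit and close the induction.
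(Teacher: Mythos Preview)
Your approach differs from the paper's, and the inductive step you sketch does not close as written.

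The paper does \emph{not} attempt to establish \eqref{added relations} directly for $\{a_n\}$. Instead it introduces the auxiliary sequence $\{b_n\}$ solving \eqref{even-odd2} with $b_1=a_1$, appeals to Lemma~\ref{q step 1} to conclude that $\{b_n\}$ satisfies \eqref{added relations} (and therefore also \eqref{even-odd}), and then compares the two sequences term by term. Assuming $a_j=b_j$ for $j\le k$ and subtracting the rewritten \eqref{even-odd} at index $k$ for the two sequences yields
\[
\epsilon\,a_k\bigl(1+q^{-2k+3-\kappa}a_k a_{k-1}\bigr)=0,\qquad \epsilon=a_{k+1}-b_{k+1},
\]
and positivity of $a_k,a_{k-1}$ forces $\epsilon=0$. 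Hence $a_n=b_n$ for all $n$, and Lemma~\ref{q step 1} applies directly to $\{a_n\}$.

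The gap in your plan lies in the inductive step. With the hypothesis in hand you have \eqref{even-odd2} at indices $\le n-1$, \eqref{added relations} at indices $\le n$, and the single new equation \eqref{even-odd} at index $n$. By your own observation, the latter is exactly
\[
(\text{\eqref{even-odd2} at }n)\;+\;a_n\cdot(\text{\eqref{added relations} at }n+1)\;=\;0,
\]
and \emph{both} bracketed expressions are linear in the new unknown $a_{n+1}$, with nonzero coefficients $a_n$ and $q^{-2n+3-\kappa}a_n a_{n-1}$ respectively. One equation in $a_{n+1}$ cannot separate two independent linear conditions on $a_{n+1}$; to conclude that \eqref{added relations} at $n+1$ vanishes you would need to verify that the value of $a_{n+1}$ determined by setting \eqref{even-odd2} at $n$ to zero agrees with that determined by setting \eqref{added relations} at $n+1$ to zero. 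That is a nontrivial identity in $a_1,\dots,a_n$ which neither the inductive hypothesis nor the telescoping relation $\mathrm{RHS}(n+1)-q^2\,\mathrm{RHS}(n-1)=q^{\kappa+2n-1}(1-q^2)$ supplies. The paper's comparison argument bypasses this precisely by invoking positivity to cancel the factor $a_k(1+q^{-2k+3-\kappa}a_k a_{k-1})$; your sketch never uses positivity, which is a further indication that the argument as stated is incomplete.
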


\begin{proof}
By the definition of $P_{n}$, the recurrence coefficients $\{a_{n}\}_{n=1}^{\infty} >0$ satisfy Equation \eqref{even-odd}. Taking the even component of Equation \eqref{even-odd}, we have
\begin{equation*}
(1-q^{n})q^{\kappa + n-1} = a_{n}(a_{n+1}+q^{1-n-\kappa}a_{n}+q^{2}a_{n-1}+q^{-2n+3-\kappa}a_{n+1}a_{n}a_{n-1}) \,.
\end{equation*}
Thus adding and subtracting $(1-q^{2})\sum^{n-2}_{j=1}a_{j}$ we arrive at
\begin{multline}
a_{n}(a_{n+1}+a_{n}+a_{n-1} + (1-q^{2})\sum^{n-2}_{j=1}a_{j}) - (1-q^{n})q^{\kappa + n-1} \\
 + a_{n}((q^{1-n-\kappa}-1)a_{n}+(q^{2}-1)\sum^{n-1}_{j=1}a_{j}+q^{-2n+3-\kappa}a_{n+1}a_{n}a_{n-1}) =0 \,. \label{odd}
\end{multline}
Similarly for the odd case we have
\begin{multline}
a_{n}(a_{n+1}+a_{n}+a_{n-1} + (1-q^{2})\sum^{n-2}_{j=1}a_{j}) - (q^{-\kappa}-q^{n})q^{\kappa + n-1} \\
+a_{n}((q^{1-n}-1)a_{n}+(q^{2}-1)\sum^{n-1}_{j=1}a_{j}+q^{-2n+3-\kappa}a_{n+1}a_{n}a_{n-1}) =0 \,. \label{even}
\end{multline}
We shall prove that a positive solution to Equation \eqref{even-odd2}, (denoted by $b_{n}$) is equivalent to a solution of Equation \eqref{even-odd} (denoted by $a_{n}$) given the same initial value $a_{1}>0$. By Lemma \ref{q step 1} the result then immediately follows. \\ \\ 
Assume that $a_{n} = b_{n}$ for $n\leq k$. Furthermore, assume $k$ is even. By Equation \eqref{even} we have
\begin{multline}
a_{k}(a_{k+1}+a_{k}+a_{k-1} + (1-q^{2})\sum^{k-2}_{j=1}a_{j}) - (1-q^{k})q^{\kappa + k-1} \\
+ a_{k}((q^{1-k-\kappa}-1)a_{k}+(q^{2}-1)\sum^{k-1}_{j=1}a_{j}+q^{-2k+3-\kappa}a_{k+1}a_{k}a_{k-1}) =0 \,.
\end{multline}
Let $\epsilon = a_{k+1} - b_{k+1}$. Thus,
\begin{equation*}
a_{k}\epsilon +
q^{-2k+3-\kappa}\epsilon a_{k}^{2}a_{k-1} =0 \,.
\end{equation*}
As $a_{k} > 0$ we must have $\epsilon =0 $. The same argument holds for $k$ odd. Equivalence follows by induction.
\end{proof}
By Theorem \ref{favard} there exists a positive measure $\mu$ such that the polynomials defined by Equation \eqref{recurrence2} satisfy
\begin{equation*}
    \int_{\mathbb{R}}P_{n}P_{m} d\mu (x) = h_{n}\delta_{n,m} \, .
\end{equation*}
Without loss of generality let $\mu$ be normalised such that
\[ \int_{\mathbb{R}}\mu(x) = 1 \,. \]
We will show that this measure is independent of the positive solution to Equation \eqref{even-odd}.
\begin{theorem} \label{2.4}
Let $\mu$ be a positive normalised measure over $\mathbb{R}$ such that the polynomials, $P_{n}$, defined by Equation \eqref{recurrence2}, satisfy 
\[
\int_{\mathbb{R}}P_{n}P_{m} d\mu (x)= h_{n}\delta_{n,m} \,. \]
Then
\begin{equation*}
(1-q^{n+\kappa +1}) \int x^{n} d\mu = \int x^{n+4} d\mu ,  \quad \mathrm{for} \, n \geq 0 \,.
\end{equation*} 
Hence, $\int_{\mathbb{R}}x^{4j} d\mu(x)$ is independent of the measure $\mu$ for $j \geq 0$.
\end{theorem}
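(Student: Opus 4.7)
The plan is to establish the moment recurrence $\mu_{n+4} = (1-q^{n+\kappa+1})\mu_n$ for every $n \ge 0$, where $\mu_n := \int x^n d\mu$; since $\mu_0 = 1$ by normalisation, this recurrence uniquely determines $\mu_{4j}$ by induction on $j$, giving the advertised independence. The main input is the identity $xD_q P_n = [n]_q P_n + A_n P_{n-2} + B_n P_{n-4}$ supplied by Theorem \ref{q step 2}.

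First I would dispose of the base case $n = 0$ by direct computation. From \eqref{recurrence2} the polynomials are $P_2 = x^2 - a_1$ and $P_4 = x^4 - (a_1+a_2+a_3)x^2 + a_1 a_3$; orthogonality $\int P_2 d\mu = 0$ yields $\mu_2 = a_1$ and then $\int P_4 d\mu = 0$ gives $\mu_4 = a_1(a_1+a_2)$. The $k = 0$ odd case of \eqref{even-odd} (with $a_0 = 0$) simplifies to $a_1(a_1+a_2) = 1 - q^{\kappa+1}$, so $\mu_4 = (1-q^{\kappa+1})\mu_0$ as required.

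For general $n \ge 1$, I would apply Theorem \ref{q step 2} with index $n+4$ and pair the resulting identity with $P_n$ in $L^2(\mu)$. Orthogonality of $P_n$ against $P_{n+4}$ and $P_{n+2}$ collapses the right-hand side to $B_{n+4} h_n$, while rewriting $xD_q$ through $xD_q f(x) = (f(x) - f(qx))/(1-q)$ on the left and again invoking orthogonality gives
\[
\int P_n(x) P_{n+4}(qx)\, d\mu = -(1-q) B_{n+4} h_n = -q^{-1-n-\kappa} h_{n+4},
\]
where the final equality uses \eqref{Ans3} together with $h_{n+4} = h_n \prod_{j=1}^4 a_{n+j}$.

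The main obstacle is converting this inner-product identity into the scalar relation $\mu_{n+4} = (1-q^{n+\kappa+1})\mu_n$. My plan is to expand $P_n(x)$ and $P_{n+4}(qx)$ explicitly as polynomials in $x$, integrate termwise, and then subtract appropriate multiples of the orthogonality relations $\int x^k P_{n+4}\, d\mu = 0$ for $0 \le k \le n+3$ in order to cancel the high-order moments $\mu_{2n+4}, \mu_{2n+2}, \ldots$ that appear on the left. The remaining low-order contributions $\mu_{n-2}, \mu_{n-4}, \ldots$ would be eliminated via the induction hypothesis, and the algebraic relations \eqref{Ans2}, \eqref{Ans3} and \eqref{added relations} should provide the specific cancellations needed to leave precisely the coefficient $1 - q^{n+\kappa+1}$ in front of $\mu_n$.
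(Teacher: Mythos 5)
Your base case is correct and matches the paper's own observation that $a_1(a_1+a_2)=1-q^{\kappa+1}$ follows from \eqref{even-odd} at $n=1$, and the identity $\int P_n(x)P_{n+4}(qx)\,d\mu=-(1-q)B_{n+4}h_n=-q^{-1-n-\kappa}h_{n+4}$ is a correct consequence of Theorem \ref{q step 2}. The gap is in the inductive step: this identity cannot produce the moment relation. Writing $P_{n+4}(y)=\sum_k p_k y^k$ and using orthogonality, the left-hand side equals $\sum_{k\ge n}p_k q^k\int x^k P_n\,d\mu$, and each $\int x^k P_n\,d\mu=c_{k,n}h_n$ where $c_{k,n}$ is determined entirely by the recurrence coefficients via \eqref{recurrence2}. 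So your identity reduces to a relation among the $a_j$'s alone; it carries no information about $\mu_n$ or $\mu_{n+4}$ that is not already encoded in the $a_j$'s, and the subsequent ``expand termwise, cancel high moments, invoke induction'' step is an unexecuted plan with no identified mechanism for isolating exactly $(1-q^{n+\kappa+1})\mu_n$. (Note also that subtracting the orthogonality relation $\int P_n(x)P_{n+4}(x)\,d\mu=0$ from your identity merely returns you to $-(1-q)\int P_n\cdot xD_qP_{n+4}\,d\mu$, so that route is circular.)

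The missing idea is that the moment relation comes from pairing against $P_0$, not against $P_n$. The paper computes $\int xD_q x^n\,d\mu$ by expanding $x^n=\sum_i\alpha_i P_i$: because the structure relation truncates at $P_{i-4}$, one has $\int xD_qP_i\cdot P_0\,d\mu=0$ for all $i\ge 5$, so only $\alpha_2A_2+\alpha_4B_4$ survives. The explicit forms of $A_2$ and $B_4$ identify these, up to the factor $\tfrac{q^{-1-\kappa}}{1-q}$, with the $P_0$-components of $x^4P_2$ and $x^4P_4$; adding back the $x^4P_0$ term together with the compensating multiple of $\int x^n\,d\mu$ reassembles $\int x^{n+4}\,d\mu$, and equating with $[n]_q\int x^n\,d\mu$ gives the claim in one stroke, with no induction at all. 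If you want to salvage your setup you must at some stage extract a coefficient against $P_0$; pairing two high-degree polynomials only ever sees the normalisation constants $h_n$.
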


\begin{proof}
Representing $x^{n}$ in the basis $P_{i}$ we have
\begin{eqnarray}
    \int xD_{q}x^{n} d\mu &=& \int xD_{q}(\sum_{i=0}^{n}\alpha_{i}P_{i})P_{0} d\mu \,,\nonumber \\
    &=& \sum_{i=1}^{n}\alpha_{i} \int xD_{q}(P_{i})P_{0} d\mu \,,\nonumber \\
    &=& \sum_{i=1}^{n}\alpha_{i} \int ([i]_{q}P_{i} + A_{i}P_{i-2} + B_{i}P_{i-4})P_{0} d\mu \,,\nonumber \\
    &=& \int (\alpha_{2}A_{2} + \alpha_{4}B_{4})P_{0}^{2} d\mu \,, \label{dq to x4}
\end{eqnarray}
where we have applied Theorem \ref{q step 2} to express $xD_{q}P_{i}$ in the basis $\{ P_{n} \}_{n=0}^{\infty}$, then the orthogonality of $P_{n}$ to arrive at the last line. Referring to Lemma \ref{q step 1},
\begin{eqnarray*}
    A_{2} &=& \frac{q^{-1-\kappa}}{1-q}a_{2}a_{1}(a_{3}+a_{2}+a_{1}) \,,\\
    B_{4} &=& \frac{q^{-1-\kappa}}{1-q}a_{4}a_{3}a_{2}a_{1} \,.
\end{eqnarray*}
By repeated application of Equation \eqref{recurrence2} we have
\begin{eqnarray*}
    x^{4}P_{4} &=& a_{4}a_{3}a_{2}a_{1}P_{0} \,,\\
    x^{4}P_{2} &=& a_{2}a_{1}(a_{3}+a_{2}+a_{1})P_{0} \,,\\
    x^{4}P_{0} &=& a_{1}(a_{2}+a_{1})P_{0} \,.
\end{eqnarray*}
where we determine $a_{1}(a_{2}+a_{1}) = (1-q^{\kappa+1})$ using Equation \eqref{even-odd} for the case $n=1$. Substituting back into Equation \eqref{dq to x4} yields
\begin{eqnarray*}
    \int xD_{q}x^{n} d\mu  &=& \int (\alpha_{2}A_{2} + \alpha_{4}B_{4})P_{0}^{2} d\mu \,,\\
    &=& \frac{q^{-1-\kappa}}{1-q} \int (\alpha_{2}x^{4}P_{2} + \alpha_{4}x^{4}P_{4})P_{0} d\mu \,,\\
    &=& \frac{q^{-1-\kappa}}{1-q} \int (\alpha_{2}x^{4}P_{2} + \alpha_{4}x^{4}P_{4} + \alpha_{0}x^{4}P_{0})P_{0} d\mu \\
    && \, - \, \frac{q^{-1-\kappa}-1}{1-q} \int \alpha_{0}P_{0}^{2} d\mu\,.\\
\end{eqnarray*}
By the orthogonality of $P_{n}$ we have
\begin{eqnarray*}
    \int xD_{q}x^{n} d\mu  &=& \frac{q^{-1-\kappa}}{1-q}\int (\alpha_{2}x^{4}P_{2} + \alpha_{4}x^{4}P_{4} + \alpha_{0}x^{4}P_{0})P_{0} d\mu \\
    && \, - \, \frac{q^{-1-\kappa}-1}{1-q} \int \alpha_{0}P_{0}^{2} d\mu\,,\\
    &=& \frac{q^{-1-\kappa}}{1-q} \int x^{4}(\sum_{i=0}^{n}\alpha_{i}P_{i})P_{0} d\mu - \frac{q^{-1-\kappa}-1}{1-q} \int (\sum_{i=0}^{n}\alpha_{i}P_{i})P_{0} d\mu \,,\\
    &=& \frac{q^{-1-\kappa}}{1-q} \int x^{n+4} d\mu - \frac{q^{-1-\kappa}-1}{1-q} \int x^{n} d\mu \,.
\end{eqnarray*}
Thus by definition of $D_{q}$,
\begin{equation*}
    [n]_{q} \int x^{n} d\mu = \frac{q^{-1-\kappa}}{1-q} \int x^{n+4} d\mu - \frac{q^{-1-\kappa}-1}{1-q} \int x^{n} d\mu \,.
\end{equation*}
After simplifying algebra this leaves
\begin{equation*}
    (1-q^{n+\kappa +1}) \int x^{n} d\mu = \int x^{n+4} d\mu \,.
\end{equation*}
\end{proof}

\begin{corollary}
Using Theorem \ref{2.4} we can deduce that the measure $\mu$ is supported in $[-1,1]$. By Theorem \ref{2.4} we have
\begin{equation*}
(1-q^{n+\kappa +1}) \int x^{n} d\mu = \int x^{n+4} d\mu ,  \quad \mathrm{for} \, n \geq 0 \,.
\end{equation*} 
Re-arranging we arrive at
\begin{equation}\label{bounded 1}
    \int x^{n}(1-x^{4}) d\mu = q^{n+\kappa+1}\int x^{n} d\mu \quad \mathrm{for} \, n \geq 0 \,.
\end{equation}
Letting $n = 2k$ be even and taking the limit as $k$ approaches infinity gives
\begin{equation}\nonumber
\lim_{k\to\infty} x^{2k} =
    \begin{cases}
            0, &         \text{if } x \in (-1,1),\\
            +\infty, &         \text{if } x \notin [-1,1].
    \end{cases}
\end{equation}
Thus, the left hand side of Equation \eqref{bounded 1} approaches negative infinity if $\mu$ is supported outside of $[-1,1]$. However, as $\mu$ is positive, the right hand side of Equation \eqref{bounded 1} is positive. Hence, $\mu$ is supported in $[-1,1]$.
\end{corollary}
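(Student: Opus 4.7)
The plan is to combine the recursion from Theorem \ref{2.4} with the positivity and normalisation of $\mu$ to establish uniform bounds on a subsequence of even moments, and then deduce support in $[-1,1]$ by a standard growth argument.

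First, I would rewrite the identity of Theorem \ref{2.4} in the equivalent form
\[
\int x^{n+4}\,d\mu = (1-q^{n+\kappa+1})\int x^{n}\,d\mu, \quad n \geq 0.
\]
Since $0<q<1$ and $\kappa\geq 0$, each factor $1-q^{n+\kappa+1}$ lies in $(0,1)$. Starting from the normalisation $\int d\mu = 1$ and iterating, this yields
\[
\int x^{4j}\,d\mu = \prod_{i=0}^{j-1}\bigl(1-q^{4i+\kappa+1}\bigr) \leq 1
\]
for every $j\geq 0$, giving a uniform upper bound on the subsequence of fourth-power moments.

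Next I would argue by contradiction. Suppose $\mu$ places mass $\epsilon>0$ on $\{|x|>1+\delta\}$ for some $\delta>0$. Then
\[
\int x^{4j}\,d\mu \geq \int_{|x|>1+\delta} x^{4j}\,d\mu \geq (1+\delta)^{4j}\,\epsilon \longrightarrow \infty,
\]
contradicting the uniform bound just obtained. Hence $\mu(\{|x|>1+\delta\}) = 0$ for every $\delta>0$, and letting $\delta\to 0^+$ gives support in $[-1,1]$.

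The only subtlety, which I do not expect to be a serious obstacle, is ensuring that every moment $\int x^{n}\,d\mu$ is finite so that Theorem \ref{2.4} may be iterated unconditionally. This is automatic from the construction: Favard's theorem furnishes $\mu$ through the orthogonality relations $\int P_nP_m\,d\mu = h_n\delta_{n,m}$ with $h_n$ finite and positive, and expressing $x^{k}$ in the basis $\{P_n\}$ exhibits $\int x^{k}\,d\mu$ as a finite linear combination of the $h_n$. An alternative route, closer in spirit to the identity $\int x^{n}(1-x^{4})\,d\mu = q^{n+\kappa+1}\int x^{n}\,d\mu$, is to take $n=2k\to\infty$ and observe that any mass outside $[-1,1]$ forces the left-hand side to $-\infty$ while the right-hand side remains non-negative; this shortcut works but requires a more careful dominated/monotone convergence argument, which is why I prefer the uniform-bound route above.
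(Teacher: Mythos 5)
Your proof is correct, but it takes a different route from the paper's. The paper works directly with the rearranged identity $\int x^{n}(1-x^{4})\,d\mu = q^{n+\kappa+1}\int x^{n}\,d\mu$, sets $n=2k$, and argues that if $\mu$ had mass outside $[-1,1]$ the left-hand side would tend to $-\infty$ while the right-hand side stays nonnegative --- essentially the ``shortcut'' you mention and set aside at the end of your proposal. You instead iterate the recursion from the normalisation $\int d\mu=1$ to get the closed form $\int x^{4j}\,d\mu = \prod_{i=0}^{j-1}(1-q^{4i+\kappa+1}) \leq 1$, and then derive a contradiction from $\int x^{4j}\,d\mu \geq (1+\delta)^{4j}\,\mu(\{|x|>1+\delta\})$. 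Your version is the more careful of the two: since $x^{4j}\geq 0$, the lower bound by the mass outside $[-1,1+\delta]$ is immediate and no limit-interchange or sign-splitting is needed, whereas the paper's argument tacitly requires splitting $\int x^{2k}(1-x^{4})\,d\mu$ into the bounded positive contribution from $[-1,1]$ and the divergent negative contribution from outside, a step it does not spell out. The paper's route is shorter and stays closer to the form of the identity actually used later; yours buys an explicit uniform bound on the fourth-power moments (in fact an exact product formula), which is a slightly stronger intermediate statement and sidesteps the convergence subtleties entirely. Your closing remark on finiteness of all moments is also a point the paper leaves implicit; your justification via the orthogonality relations is adequate.
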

We can now combine Theorem \ref{2.4} and Lemma \ref{uniqueness} to prove the main result of this paper. 

\begin{theorem}\label{final theorem}
Suppose we have a sequence of positive numbers $\{ a_{n}\}_{n=1}^{\infty}$ which satisfy the IVP Equation \eqref{even-odd}. Then 
\[ a_{1} = \int_{-1}^{1}x^{2}w(x)d_{q}x \,, \]
where $w(x)$ is defined as
\begin{equation} \label{w(x)}
    w(x) =  \frac{((qx)^{4};q^{4})_{\infty}|x|^{\kappa}}{(1-q)^{\frac{\kappa}{4}}} \,.
\end{equation}
\end{theorem}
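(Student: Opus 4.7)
The plan is to identify $a_1$ as the second moment of the orthogonality measure $\mu$ supplied by Favard's theorem, pin down this moment value by combining Theorem~\ref{2.4} with Lemma~\ref{uniqueness}, and then verify that $w(x)\,d_q x$ is a concrete measure realising the same moments so that the value of $a_1$ can be read off as the claimed $q$-integral.

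First I would note that the recurrence \eqref{recurrence2} with $P_0 = 1$ gives $P_2(x) = x^2 - a_1$, so orthogonality of $P_2$ against $P_0$ under the normalised measure $\mu$ (with $\int d\mu = 1$) yields
\[
a_1 \;=\; \int_{\mathbb{R}} x^2 \, d\mu(x).
\]
Theorem~\ref{2.4} determines every moment of order divisible by $4$ recursively from $\mu_0 = 1$ via $\mu_{n+4} = (1 - q^{n+\kappa+1}) \mu_n$. The corollary preceding the theorem confines $\mu$ to $[-1,1]$, whence $\int e^{t x^4} d\mu \leq e^t < \infty$ and the hypothesis of Lemma~\ref{uniqueness} holds with $k=2$. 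Applying that lemma then forces $\int x^2 d\mu$ to be a function of the subsequence $\{\mu_{4j}\}_{j \geq 0}$ alone, and hence $a_1$ is uniquely determined before any explicit formula is invoked.

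To identify the value explicitly I would exhibit the $q$-measure $w(x)\,d_q x$ as a concrete candidate satisfying the same moment recursion. The key tool is the $q$-functional equation
\[
(1 - q^4 x^4)\,w(qx) \;=\; q^{\kappa}\,w(x),
\]
which follows immediately from the telescoping $((qx)^4;q^4)_\infty = (1 - q^4 x^4)\,(q^8 x^4;q^4)_\infty$. Substituting this into the series definition of the $q$-integral and shifting the summation index $k \mapsto k-1$ on both the positive and negative halves gives
\[
\int_{-1}^{1} x^{n+4} w(x)\,d_q x \;=\; (1 - q^{n+\kappa+1}) \int_{-1}^{1} x^n w(x)\,d_q x,
\]
matching the recursion from Theorem~\ref{2.4}. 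Once one checks that the prefactor $(1-q)^{-\kappa/4}$ in \eqref{w(x)} is exactly the constant that produces $\int_{-1}^{1} w(x)\,d_q x = 1$ (which reduces, via the $q$-exponential identity $\sum_k z^k/(q^4;q^4)_k = 1/(z;q^4)_\infty$, to a closed evaluation of the relevant infinite Pochhammer quotient), the uniqueness supplied by Lemma~\ref{uniqueness} identifies $a_1$ with $\int_{-1}^{1} x^2 w(x)\,d_q x$.

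The main obstacle is the final $q$-calculation: executing the index shift in the bilateral $q$-sum without losing a boundary contribution, and separately verifying that the normalising constant really is $(1-q)^{-\kappa/4}$. Everything else is routine assembly of Theorem~\ref{2.4}, the compact-support corollary, and Lemma~\ref{uniqueness}.
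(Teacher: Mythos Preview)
Your proposal is correct and follows essentially the same route as the paper: identify $a_1$ with the second moment of the Favard measure $\mu$, use Theorem~\ref{2.4} to fix the $4j$-moments, invoke Lemma~\ref{uniqueness} under an exponential-moment bound, and match against the explicit weight $w(x)\,d_qx$. Two minor differences are worth noting: the paper outsources the verification that $w(x)\,d_qx$ satisfies the moment recursion to Boelen \textit{et al.}\ rather than redoing the $q$-functional equation computation you sketch, and it obtains the finiteness of $\int e^{cx^4}\,d\mu$ via monotone convergence and the equality of $4j$-moments with those of the compactly supported $w$, whereas you use the compact-support corollary for $\mu$ directly (which is cleaner). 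Your flagged obstacle about the normalising constant $(1-q)^{-\kappa/4}$ is real but is again absorbed into the citation in the paper's version.
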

\begin{proof}
Boelen \textit{et al.} \cite{Boelen} prove that $w(x)d_{q}x$ satisfies
Theorem \ref{2.4}. Let $\mu$ be another normalised measure which satisfies Theorem \ref{2.4}, for an arbitrary positive solution, $\{ a_{n} \}_{n=1}^{\infty}$, to Equation \eqref{even-odd}. By the monotone convergence theorem and Theorem \ref{2.4} we have
\begin{equation} \nonumber
    \int e^{c(x^{4})} d\mu (x) = \lim_{n \to \infty} \int \frac{c^{n}x^{4n}}{n!} d\mu (x) =  \lim_{n \to \infty} \int \frac{c^{n}x^{4n}}{n!} w(x)d_{q}x = \int e^{c(x^{4})} w(x)d_{q}x \,.
\end{equation}
The support of $w(x)$ is compact, therefore
\[ \int e^{c(x^{2k})} d\mu (x) = \int e^{c(x^{2k})} w(x)d_{q}x < \infty \,. \]
Applying Lemma \ref{uniqueness} we find
\[ \int x^{2} d\mu (x) = \int x^{2} w(x)d_{q}x \,. \]
Taking the integral with respect to $\mu$ of Equation \eqref{recurrence2}, when $n=1$, we see that
\[ \int x^{2} d\mu (x) = a_{1} \,. \]
Thus, $a_{1} = \int x^{2} w(x)d_{q}x$.
\end{proof}

\section{Results and Discussion}\label{discussion}
The main result of this paper is Theorem \ref{final theorem}, the proof can be split into two parts. 
\begin{enumerate}
    \item First, we show that if the recurrence coefficients of a set of monic polynomials, $\{P_{n}\}_{n=0}^{\infty}$, satisfy Equation \eqref{even-odd}, then $\int x^{4j} d\mu$ is independent of the measure $\mu$ for all $j$ in $\mathbb{N}$, where $\mu$ is a positive measure such that $\int P_{i}P_{k} d\mu = h_{i}\delta_{i,k}$. \\ 
    \item Second, we show that if for some $p$ in $\mathbb{N}$, $m_{j} = \int x^{2pj} d\mu$ is known for all $j$ and decays sufficiently fast, then $\int x^{2} d\mu$ is uniquely determined by $\{m_{j}\}_{j=0}^{\infty}$.
\end{enumerate} 
The same method of proof applies to other non-linear discrete equations. For example, the method can be applied to continuous orthogonal polynomials. Equation \eqref{intro eq 2} can be shown to have a unique positive solution by directly applying the theory presented in this paper to the continuous case. With a slight modification to the first step of the proof, the existence of a unique positive solution to higher order, continuous and $q$-discrete, non-linear discrete equations can also be shown. There is also the possibility of applying the presented proof to non-linear discrete equations which do not arise from orthogonal polynomials. \\ \\
To enable an extension, an interesting direction would be the long open question, what subset of moments are needed to show a measure is unique? This would enable a proof of similar results for discrete equations satisfied by the recurrence coefficients of polynomials orthogonal to measures of the form $|x|^{\kappa}e^{P(x)}dx$, where $P(x)$ is a polynomial of even degree with negative leading order. Likewise, it would enable a proof for the $q$-discrete analogue, and possibly the $d$-discrete case. In particular, such an extension would confirm a similar conjecture by Boelen \textit{et al.} \cite{Boelen} for a generalised form of Equation \eqref{even-odd}, which remains an open problem.  

\section*{Acknowledgement(s)}
I would like to thank my supervisor Prof. Nalini Joshi for suggesting this field of research and highlighting useful resources, as well as her invaluable help in writing the paper. 

\section*{Funding}
This research was supported by an Australian Government Research Training Program (RTP) Scholarship and by the University of Sydney under the Postgraduate Research Supplementary Scholarship in Integrable Systems.

\bibliographystyle{plain}
\bibliography{References}

\end{document}